\newtheorem{theorem}{Theorem}
\theoremstyle{plain}
\newtheorem{corollary}{Corollary}
\newtheorem{example}{Example}
\newtheorem{lemma}{Lemma}
\newtheorem{proposition}{Proposition}
\newtheorem{remark}{Remark}
\numberwithin{equation}{section}
\begin{document}
\title[Connection coefficients and orthogonalizing measures]{On affinity
relating two positive measures and the connection coefficients between
polynomials orthogonalized by these measures}
\author{Pawe\l\ J. Szab\l owski}
\address{Department of Mathematics and Information Sciences,\\
Warsaw University of Technology\\
pl. Politechniki 1, 00-661 Warsaw, Poland}
\email{pawel.szablowski@gmail.com}
\date{January 30, 2012}
\subjclass[2010]{Primary 42C05, 26C05; Secondary 42C10, 12E05}
\keywords{Orthogonal polynomials, positive measures, Kesten--McKay measure,
Jacobi polynomials, Charlier polynomials}
\thanks{The author is grateful to the unknown referee for drawing his
attention to valuable works of P. Maroni.}

\begin{abstract}
We consider two positive, normalized measures $dA\left( x\right) $ and $%
dB\left( x\right) $ related by the relationship $dA\left( x\right) =\frac{C}{%
x+D}dB\left( x\right) $ or by $dA\left( x\right) \allowbreak =\allowbreak 
\frac{C}{x^{2}+E}dB\left( x\right) $ and $dB\left( x\right) $ is symmetric.
We show that then the polynomial sequences $\left\{ a_{n}\left( x\right)
\right\} ,$ $\left\{ b_{n}\left( x\right) \right\} $ orthogonal with respect
to these measures are related by the relationship $a_{n}\left( x\right)
=b_{n}\left( x\right) +\kappa _{n}b_{n-1}\left( x\right) $ or by $%
a_{n}\left( x\right) \allowbreak =\allowbreak b_{n}\left( x\right)
\allowbreak +\allowbreak \lambda _{n}b_{n-2}\left( x\right) $ for some
sequences $\left\{ \kappa _{n}\right\} $ and $\left\{ \lambda _{n}\right\} .$
We present several examples illustrating this fact and also present some
attempts for extensions and generalizations. We also give some universal
identities involving polynomials $\left\{ b_{n}\left( x\right) \right\} $
and the sequence $\left\{ \kappa _{n}\right\} $ that have a form of Fourier
series expansion of the Radon--Nikodym derivative of one measure with
respect to the other.
\end{abstract}

\maketitle

\section{Introduction}

We study relationship between the pair of orthogonal polynomials and the
pair of measures that make these polynomials orthogonal. This problem has
practical importance. If solved in full generality would enable quick and
easy way of finding sets of orthogonal polynomials for a given measure
simplifying the usual path of the Gram-Smith orthogonalization. Besides it
would provide quick and easy way of finding 'connection coefficients'
between the two analyzed sets of orthogonal polynomials. On its side
'connection coefficients', as it is well known supply many useful
informations about the properties of the involved sets of polynomials. So
far in the literature devoted to connection coefficients like \cite{Szwarc92}%
, \cite{Dimitrov01}, \cite{Area 04} the authors studied the properties of
these coefficients and their relationship to zeros of orthogonal polynomials
in question without referring to the properties of orthogonalizing measures.

We are solving the problem of affinity between connection coefficients and
measures that make polynomials orthogonal only partially. There are still
many challenging questions that we pose in Section \ref{ext} and which are
unsolved to our knowledge.

To be more precise we will assume throughout the paper the following setting:

We consider two sequences of monic, orthogonal polynomials $\left\{
a_{n}\right\} $ and $\left\{ b_{n}\right\} $ such that their 3-term
recurrences are as given below: 
\begin{eqnarray}
a_{n+1}\left( x\right) \allowbreak &=&\allowbreak (x-\alpha _{n})a_{n}\left(
x\right) -\hat{\alpha}_{n-1}a_{n-1}\left( x\right) ,  \label{aa} \\
b_{n+1}\left( x\right) &=&\left( x-\beta _{n}\right) b_{n}\left( x\right) -%
\hat{\beta}_{n-1}b_{n-1}\left( x\right)  \label{bb}
\end{eqnarray}%
with $a_{-1}\left( x\right) \allowbreak =\allowbreak b_{-1}\left( x\right)
\allowbreak =\allowbreak 0,$ $a_{0}\left( x\right) \allowbreak =\allowbreak
b_{0}\left( x\right) \allowbreak =\allowbreak 1.$

In \cite{Szab2010}, Proposition 1 it was shown that if these measures are
such that $\limfunc{supp}A=\allowbreak \limfunc{supp}B$ and $dA\left(
x\right) \allowbreak =\allowbreak \frac{1}{P_{r}\left( x\right) }dB\left(
x\right) $, where $P_{r}$ is a polynomial of order $r,$ then there exist $r$
sequences $\left\{ c_{n}^{\left( j\right) }\right\} _{n\geq 1,1\leq j\leq r}$
such that%
\begin{equation}
a_{n}\left( x\right) \allowbreak =\allowbreak b_{n}\left( x\right)
+\sum_{j=1}^{r}c_{n}^{\left( j\right) }b_{n-j}\left( x\right) .  \label{qo}
\end{equation}%
In the cited result it was not presented how to relate 3-term recurrence
satisfied by say the set $\left\{ b_{n}\right\} $ and the form of the
polynomial $P_{r}$ to the form of the coefficients $\left\{ c_{n}^{\left(
j\right) }\right\} _{n\geq 1,1\leq j\leq r}.$

\begin{remark}
Notice that our assumptions mean in fact that $dA<<dB$ and $\frac{dA(x)}{%
dB(s)}\allowbreak =\allowbreak \frac{1}{P_{r}\left( x\right) },$ where $%
\frac{dA(x)}{dB(s)}$ denotes Radon--Nikodym derivative of $dA$ with respect
to $dB.$
\end{remark}

Relationship like (\ref{qo}) between sets of orthogonal polynomials is
called quasi-orthogonality as defined in \cite{Chih57} and \cite{Dic61}.
More precisely polynomials $\left\{ a_{n}\right\} $ that are related to
polynomials $\left\{ b_{n}\right\} $ by (\ref{qo}) are called
quasi-orthogonal provided $b_{n}^{\prime }s$ are orthogonal. Thus our
problem can be expressed in the following way: If the measures $dA$ and $dB$
are related by $dA\left( x\right) \allowbreak =\allowbreak \frac{1}{%
P_{r}\left( x\right) }dB\left( x\right) ,$ then there exists $r$ sequences
of numbers $\left\{ c_{n}^{\left( j\right) }\right\} _{n\geq 1,1\leq j\leq
r} $ such that quasi-orthogonal polynomials defined by (\ref{qo}) are
orthogonal (with respect to $dA).$

There exits also another, similar in a way, path of research followed by
Pascal Maroni and his associates. The results of their research were
presented in the series of papers \cite{M1}-\cite{M6}. The problem
considered by Maroni concerns general linear regular (i.e. possessing sets
of orthogonal polynomials) functionals $u$ and $v$ (not necessarily in the
form of measure) defined on the linear space of polynomials and related to
one another by the relationship 
\begin{equation*}
x^{m}u=\lambda v,
\end{equation*}%
where $m$ is a fixed integer (in Maroni's papers $m\leq 4)$ and $\lambda $ a
fixed complex number. In majority of cases he assumes that regular $v$ has a
form $<v,p>\allowbreak =\allowbreak \int V(x)p(x)dx$ where $p$ is a
polynomial while $V$ is a locally integrable function rapidly decaying at
infinity. Maroni is interested in conditions for the existence of regular $u$
and also in the relationship between the sets of polynomials orthogonal with
respect to $v$ and $u.$ In his studies he showed that orthogonal polynomials
of $u$ must be linear combinations of the last $m+1$ (i.e. are
quasi-orthogonal). He also obtained some (mostly in the case of $%
m\allowbreak =\allowbreak 1)$ recursive relations relating sets of
orthogonal polynomials of $u$ and $v$ and coefficients of the
quasi-orthogonality.

Notice that even if $v$ is a measure $u$ may be not. Moreover it is
expressed (as it follows from Maroni's papers) by derivatives of Dirac's
delta and Cauchy's principal value. The existence conditions are not simple
and constitute major part of these papers.

That is why although some of the results obtained below were first
discovered by Maroni we will repeat them for the sake of uniformity of
treatment. Of course we will point out which of them were mentioned in
Maroni's papers.

In the present paper we continue the research started in \cite{Szab2010} and
relate the 3-term recurrence coefficients satisfied by $\left\{
b_{n}\right\} $ and the exact form of the polynomial $P_{r}$ for $r=1,2$ to
coefficients $c_{n}^{\left( 1\right) }$ for $r\allowbreak =\allowbreak 1$
and $c_{n}^{\left( 1\right) },$ $c_{n}^{\left( 2\right) }$ for $r\allowbreak
=\allowbreak 2$. We also give the 3-term recurrence coefficients of the
polynomials $\left\{ a_{n}\right\} $. Besides we also provide certain
universal identities satisfied by the polynomials $\left\{ b_{n}\right\} ,$
coefficients $\left\{ \hat{\beta}_{n}\right\} ,$ $\left\{ c_{n}^{\left(
1\right) }\right\} $ and the parameters of the polynomials $P_{r}.$

The paper is organized as follows: In the next Section \ref{gl} we present
our main result concerning the case $r\allowbreak =\allowbreak 1$ and
illustrate it by $3$ examples concerning well known families of polynomials
like Jacobi or Charlier. Examples are presented in Section \ref{przyklady}.
Less complete or less simple and nice results are presented in Section \ref%
{ext}. Here also we will illustrate the developed ideas by a few examples.
Finally Section \ref{dowody} contains less interesting or lengthy proofs of
our results.

\section{Main results\label{gl}}

The simplest but also the most important case is when $r\allowbreak
=\allowbreak 1.$ This case is treated by the theorem below:

\begin{theorem}
\label{glowny}Let the sequence of monic, orthogonal polynomials $\left\{
b_{n}\right\} $ be defined by the 3-term recurrence (\ref{bb}). Suppose that 
$dB\left( x\right) $ is the positive measure that makes these polynomials
orthogonal. Let us consider another normalized measure $dA\left( x\right) $
related to $dB\left( x\right) $ by the relationship: 
\begin{equation}
dA\left( x\right) \allowbreak =\allowbreak \frac{C}{x+D}dB\left( x\right) ,
\label{jednomian}
\end{equation}%
so that $\frac{C}{\left( x+D\right) }\geq 0$ on the support of $dB$ (and of $%
dA$).

Then there exists a number sequence $\left\{ \kappa _{n}\right\} $ defined
by the relationship:%
\begin{equation}
\kappa _{n}=\beta _{n-1}-\frac{\hat{\beta}_{n-2}}{\kappa _{n-1}}+D,
\label{_1}
\end{equation}%
$n\geq 2$ with $\kappa _{1}\allowbreak =\allowbreak \beta _{0}+D-C,$ such
that the sequence of monic polynomials defined by: 
\begin{equation}
a_{n}\left( x\right) =b_{n}\left( x\right) +\kappa _{n}b_{n-1}\left(
x\right) .  \label{zaleznosc}
\end{equation}%
satisfies the 3-term recurrence (\ref{aa}) with:%
\begin{eqnarray}
\alpha _{n}\allowbreak  &=&\allowbreak \beta _{n}+\kappa _{n}-\kappa _{n+1},
\label{new1} \\
\hat{\alpha}_{n-1} &=&\kappa _{n}\frac{\hat{\beta}_{n-2}}{\kappa _{n-1}},
\label{new2}
\end{eqnarray}%
and is orthogonal with respect to the measure $dA\left( x\right) .$
\end{theorem}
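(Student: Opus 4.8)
The plan is to prove the two assertions of the theorem—that the polynomials $a_n=b_n+\kappa_n b_{n-1}$ of (\ref{zaleznosc}) are orthogonal with respect to $dA$, and that they obey (\ref{aa}) with the stated coefficients—in that order, reading off the recurrence once orthogonality is secured. The single device used throughout is the defining relation (\ref{jednomian}) cleared of its denominator, namely $(x+D)\,dA(x)=C\,dB(x)$, which turns the inconvenient factor $1/(x+D)$ into a polynomial identity whenever an integrand already carries a factor $x+D$. First I would introduce the Cauchy-type integrals $\nu_m=\int \frac{b_m(x)}{x+D}\,dB(x)$. Substituting (\ref{bb}) and writing $\frac{(x-\beta_m)b_m}{x+D}=b_m-(D+\beta_m)\frac{b_m}{x+D}$, then using $\int b_m\,dB=\delta_{m,0}$ (normalization together with orthogonality to $b_0\equiv 1$), produces the recurrence $\nu_{m+1}=-(D+\beta_m)\nu_m-\hat\beta_{m-1}\nu_{m-1}$ for $m\ge 1$, with $\nu_0=1/C$ (this is just $\int dA=1$) and $\nu_1=1-(D+\beta_0)/C$.

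Setting $\kappa_n:=-\nu_n/\nu_{n-1}$ and feeding in this recurrence, a one-line manipulation reproduces exactly (\ref{_1}), while the initial values give $\kappa_1=\beta_0+D-C$; so the sequence named in the theorem is characterized analytically as $\kappa_n=-\nu_n/\nu_{n-1}$, and in particular $\int a_n\,dA=C(\nu_n+\kappa_n\nu_{n-1})=0$ holds by the very choice of $\kappa_n$. Orthogonality now follows quickly. For $m\le n-2$, multiplying by $(x+D)$ gives $\int a_n\,(x+D)b_m\,dA=C\int a_n b_m\,dB=0$, since $a_n$ is a combination of $b_n,b_{n-1}$, both of which are $dB$-orthogonal to $b_m$. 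The polynomials $\{(x+D)b_m:0\le m\le n-2\}$ form a basis of the hyperplane of polynomials of degree $\le n-1$ vanishing at $-D$, so $a_n$ is $dA$-orthogonal to that hyperplane; adjoining the single further relation $\int a_n\,dA=0$ upgrades this to orthogonality against every polynomial of degree $\le n-1$ (writing $q=[q-q(-D)]+q(-D)$). Since $a_n$ is monic of degree $n$, it is the monic orthogonal polynomial of $dA$.

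For the recurrence I would substitute $a_n=b_n+\kappa_n b_{n-1}$ into $a_{n+1}-(x-\alpha_n)a_n+\hat\alpha_{n-1}a_{n-1}$, replace $xb_n$ and $xb_{n-1}$ using (\ref{bb}), and collect the coefficients of $b_{n+1},b_n,b_{n-1},b_{n-2}$. The $b_{n+1}$ coefficient cancels automatically; the $b_n$ coefficient forces (\ref{new1}); and the $b_{n-2}$ coefficient forces (\ref{new2}). The main obstacle is the $b_{n-1}$ coefficient, which yields a second, ostensibly different expression for $\hat\alpha_{n-1}$; the real work is checking that it agrees with (\ref{new2}), and this is precisely where the recurrence (\ref{_1}) must be invoked—eliminating $\kappa_{n+1}$ through $\kappa_{n+1}\kappa_n=(D+\beta_n)\kappa_n-\hat\beta_{n-1}$ and rewriting $\kappa_n\hat\beta_{n-2}/\kappa_{n-1}=\kappa_n(D+\beta_{n-1})-\kappa_n^2$ collapses the two expressions to the same value.

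Finally I would address well-definedness: the divisions in (\ref{_1}) and (\ref{new2}) require $\kappa_n$ finite and nonzero, equivalently $\nu_n\ne 0$ for all $n$. This follows from positivity of $dA$, and it is the one place where the sign hypothesis $C/(x+D)\ge 0$ (which keeps $-D$ off the support of $dB$) is genuinely used.
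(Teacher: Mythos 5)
Your proof is correct, but it reaches the conclusion by a genuinely different route from the paper's. The paper does not establish orthogonality of $a_{n}=b_{n}+\kappa _{n}b_{n-1}$ from scratch: it imports the existence of the quasi-orthogonal connection (\ref{zaleznosc}) from Proposition 1 of \cite{Szab2010}, and then obtains (\ref{_1}) indirectly, by eliminating $\alpha _{n}$ and $\hat{\alpha}_{n-1}$ from the three coefficient equations to show that $\kappa _{n}+\hat{\beta}_{n-2}/\kappa _{n-1}-\beta _{n-1}$ is independent of $n$, after which $\kappa _{1}$ and $\kappa _{2}$ are computed by direct integration against $dA$ to identify the constant as $D$. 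You instead build the sequence analytically as $\kappa _{n}=-\nu _{n}/\nu _{n-1}$ with $\nu _{m}=\int b_{m}(x)(x+D)^{-1}dB(x)$, obtain (\ref{_1}) in one line from the three-term recurrence satisfied by the $\nu _{m}$, and prove orthogonality directly via the span argument with $\{(x+D)b_{m}\}_{m\leq n-2}$ together with the single relation $\int a_{n}\,dA=0$. This makes the argument self-contained (no appeal to the cited proposition), produces the initial value $\kappa _{1}=\beta _{0}+D-C$ automatically rather than by a separate integration, and isolates exactly where positivity enters; the one point you should spell out is the nonvanishing $\nu _{n}\neq 0$, which follows because $\nu _{n}=0$ would make $b_{n}$ orthogonal in $L_{2}(dA)$ to every polynomial of degree at most $n$, forcing $\int b_{n}^{2}\,dA=0$. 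The recurrence-coefficient part --- expanding $xa_{n}$ two ways and matching the coefficients of $b_{n+1},b_{n},b_{n-1},b_{n-2}$ to get (\ref{new1}) and (\ref{new2}) --- coincides with the paper's computation; your explicit consistency check between the two expressions for $\hat{\alpha}_{n-1}$ is precisely the algebra the paper runs in the reverse direction to derive (\ref{_1}).
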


\begin{remark}
Recursive equations (\ref{_1}) and (\ref{new1}) were obtained by P. Maroni
in \cite{M1} as sidelines of his results obtained in a slightly different
context. (\ref{new2}) was obtained in a different but equivalent form.
\end{remark}

\begin{proof}
Is shifted to section \ref{dowody}.
\end{proof}

We have immediate remarks, observations and corollaries

\begin{remark}
All coefficients $\kappa _{n}$ have the same sign i.e. are either all
positive or all negative. This follows the fact that since $dA\left(
x\right) $ and $dB\left( x\right) $ are positive measures we must have
nonnegative both $\hat{\alpha}_{n}$ and $\hat{\beta}_{n}.$ Then we use (\ref%
{new2}).
\end{remark}

\begin{remark}
Notice that following relationship $\kappa _{1}\allowbreak =\allowbreak
\beta _{0}+D-C$ and (\ref{zaleznosc}), $\frac{C}{x+D}$ can be written as $%
\frac{1}{a_{1}\left( x\right) /C+1}$ which fits assumptions of Proposition 1
of \cite{Szab2010}.
\end{remark}

\begin{remark}
Following (\ref{new2}) and the fact that $\hat{\alpha}_{n-1}\geq 0$ we
deduce that either $\beta _{n}\allowbreak +\allowbreak D\allowbreak \geq
\allowbreak 0$ or $\beta _{n}\allowbreak +\allowbreak D\allowbreak \leq
\allowbreak 0$ for all $n\geq 1.$ Consequently either we have for all $n\geq
0$ 
\begin{equation*}
\beta _{n-1}+D\leq \kappa _{n}\leq 0~~\text{or~~}0\leq \kappa _{n}\leq \beta
_{n-1}+D.
\end{equation*}
\end{remark}

\begin{corollary}
\label{rozw}Under assumptions of Theorem \ref{glowny} we have 
\begin{equation}
b_{n}\left( x\right) =a_{n}\left( x\right) +\sum_{j=1}^{n}\left( -1\right)
^{j}\left( \prod_{k=n-j+1}^{n}\kappa _{k}\right) a_{n-j}\left( x\right) 
\label{odwr}
\end{equation}%
for $n\allowbreak =\allowbreak 0,1,2,\ldots $ . Further under additional
assumption that $\int_{\limfunc{supp}B}\frac{1}{(x+D)^{2}}dB\left( x\right)
<\infty $ we have:%
\begin{equation*}
1+\sum_{n\geq 1}\left( \prod_{k=1}^{n}\frac{\kappa _{k}}{\hat{\beta}_{k-1}}%
\right) ^{2}=C^{2}\int_{\limfunc{supp}B}\frac{1}{(x+D)^{2}}dB\left( x\right) 
\end{equation*}%
and 
\begin{equation}
\frac{C}{x+D}=1+\sum_{n\geq 1}\left( -1\right) ^{n}\left( \prod_{k=1}^{n}%
\frac{\kappa _{k}}{\hat{\beta}_{k-1}}\right) b_{n}\left( x\right) ,
\label{szereg}
\end{equation}%
on $\limfunc{supp}B$ in $L_{2}\left( \limfunc{supp}B,\mathcal{B},dB\left(
x\right) \right) .$ If additionally $\sum_{n\geq 1}\left( \prod_{k=1}^{n}%
\frac{\kappa _{k}}{\hat{\beta}_{k-1}}\right) ^{2}\log ^{2}n<\infty ,$ then
convergence in (\ref{szereg}) is almost ($dB\left( x\right) )$ pointwise on $%
\limfunc{supp}B.$
\end{corollary}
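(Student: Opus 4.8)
The plan is to establish Corollary \ref{rozw} in three stages, exploiting the triangular relationship \eqref{zaleznosc} and the orthogonality structure guaranteed by Theorem \ref{glowny}. First I would prove the inversion formula \eqref{odwr}. Since \eqref{zaleznosc} expresses each $a_{n}$ as $b_{n}+\kappa_{n}b_{n-1}$, the transformation from the $\{b_{n}\}$ basis to the $\{a_{n}\}$ basis is lower-triangular with unit diagonal, so it is formally invertible. The cleanest route is induction on $n$: assuming the claimed expansion \eqref{odwr} holds for all indices below $n$, I would substitute $b_{k}=a_{k}+\kappa_{k}b_{k-1}$ repeatedly, or equivalently verify that the proposed coefficients $(-1)^{j}\prod_{k=n-j+1}^{n}\kappa_{k}$ satisfy the recurrence forced by inverting \eqref{zaleznosc}. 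This is a routine telescoping computation and should present no real difficulty.

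Next I would derive the series identity \eqref{szereg}. The key observation is that, by Remark after the theorem and the explicit form $\kappa_{1}=\beta_{0}+D-C$, the Radon--Nikodym derivative $\frac{C}{x+D}$ is the density of $dA$ with respect to $dB$, hence it lies in $L_{2}(\operatorname{supp}B,\mathcal{B},dB)$ precisely under the stated integrability hypothesis $\int\frac{1}{(x+D)^{2}}dB<\infty$. Since $\{b_{n}\}$ is an orthogonal basis of this $L_{2}$ space, I would expand $\frac{C}{x+D}$ in its Fourier--$b_{n}$ series, so that the $n$-th coefficient is $\frac{1}{\|b_{n}\|^{2}}\int \frac{C}{x+D}\,b_{n}(x)\,dB(x)=\frac{1}{\|b_{n}\|^{2}}\int b_{n}(x)\,dA(x)$. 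The plan is to show this last integral equals $(-1)^{n}\prod_{k=1}^{n}\kappa_{k}\cdot\big(\text{normalizing constant}\big)$; concretely, I would integrate \eqref{odwr} against $dA$, using that $\int a_{j}\,dA=0$ for $j\geq1$ by orthogonality of the $\{a_{n}\}$ with respect to $dA$, which kills every term except the $j=n$ term and leaves exactly $(-1)^{n}\prod_{k=1}^{n}\kappa_{k}$. Combining this with the standard norm formula $\|b_{n}\|^{2}=\prod_{k=1}^{n}\hat{\beta}_{k-1}$ (where $\|b_{0}\|^{2}=1$ by normalization) yields the coefficient $(-1)^{n}\prod_{k=1}^{n}\frac{\kappa_{k}}{\hat{\beta}_{k-1}}$ appearing in \eqref{szereg}, and Parseval's identity then gives the displayed sum-of-squares equality for the $L_{2}$-norm of $\frac{C}{x+D}$.

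The main obstacle is the final, almost-everywhere pointwise convergence claim, since $L_{2}$ convergence of an orthogonal expansion does not by itself yield pointwise convergence. Here I would invoke the Rademacher--Menshov theorem, which guarantees almost everywhere convergence of an orthogonal series $\sum_{n}c_{n}\phi_{n}$ whenever $\sum_{n}c_{n}^{2}\log^{2}n<\infty$; applying it with $c_{n}=\prod_{k=1}^{n}\frac{\kappa_{k}}{\hat{\beta}_{k-1}}$ and $\phi_{n}=b_{n}$ (normalized) gives precisely the stated sufficient condition $\sum_{n\geq1}\big(\prod_{k=1}^{n}\frac{\kappa_{k}}{\hat{\beta}_{k-1}}\big)^{2}\log^{2}n<\infty$. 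The delicate point is ensuring the hypotheses of Rademacher--Menshov are met for this particular orthogonal system, namely that $\{b_{n}/\|b_{n}\|\}$ forms a genuine orthonormal sequence in $L_{2}(dB)$ and that the weighting by $\log^{2}n$ matches the theorem's normalization; once that bookkeeping is in place, the pointwise statement follows immediately.
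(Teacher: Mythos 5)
Your proposal is correct and follows essentially the same route as the paper: verify the inversion \eqref{odwr} by telescoping, then expand the Radon--Nikodym derivative $\frac{C}{x+D}$ in a Fourier series in the $b_{n}$'s, computing the coefficients by integrating \eqref{odwr} against $dA$ (so that orthogonality kills all terms except $j=n$), using the Favard norm formula $\int b_{n}^{2}\,dB=\prod_{k=0}^{n-1}\hat{\beta}_{k}$ and Parseval for the sum-of-squares identity, and Rademacher--Menshov for the almost-everywhere statement. Your write-up is in fact more explicit than the paper's, which delegates the Fourier-expansion step to the "ratio of density expansion" of the cited earlier work; like the paper, you leave implicit the completeness of the polynomial system in $L_{2}(dB)$ needed for the series to converge to the function itself rather than to its projection.
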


\begin{proof}
Using (\ref{odwr}) we have $b_{n}\left( x\right) \allowbreak +\allowbreak
\kappa _{n}b_{n-1}\left( x\right) \allowbreak =\allowbreak
a_{n}+\sum_{j=1}^{n}\left( -1\right) ^{j}\left( \prod_{k=n-j+1}^{n}\kappa
_{k}\right) a_{n-j}\left( x\right) \allowbreak +\allowbreak \kappa
_{n}a_{n-1}\allowbreak +\allowbreak \sum_{j=1}^{n-1}\left( -1\right)
^{j}\left( \prod_{k=n-j}^{n}\kappa _{k}\right) a_{n-1-j}\left( x\right)
\allowbreak =\allowbreak a_{n}.$ Now we apply idea of ratio of density
expansion presented in \cite{Szab2010} and use (\ref{odwr}). On the way we
notice that the requirement that both measures (i.e. $dA$ and $dB)$ have
densities with respect to Lebesgue measure can be dropped. We also utilize
the fact that $\int_{\limfunc{supp}B}b_{n}^{2}\left( x\right) dB\left(
x\right) \allowbreak =\allowbreak \prod_{k=0}^{n-1}\hat{\beta}_{k}$ which
follows Favard's Theorem. The we also use Rademacher--Menshov theorem
concerning almost sure convergence of $L_{2}$ converging Fourier series.
\end{proof}

\section{Examples\label{przyklady}}

To illustrate how simple and easy is to utilize the presented in the
previous section observations and rules let us consider the following few
examples:

It should be remarked that although examples concerning Jacobi and Legendre
polynomials were considered by Maroni in \cite{M1} they were illustrating
different phenomena discovered by Maroni in this paper. In particular
formulae (\ref{Jkappa}), (\ref{Jexp}) and (\ref{expJ}) do not appear in
Maroni's paper. We present them in order to illustrate the use of equations (%
\ref{_1}), (\ref{new1}), (\ref{new2}) and expansion (\ref{szereg}).

\begin{example}[Jacobi polynomials]
Recall (e.g. basing on \cite{Andrews1999} or \cite{IA}) that monic Jacobi
polynomials $J_{n}^{\left( \alpha ,\beta \right) }\left( x\right) $ satisfy
the following 3-term recurrence: 
\begin{subequations}
\label{Jacobi}
\begin{gather}
J_{n+1}^{(\alpha ,\gamma )}(z)=\left( x+\frac{\alpha ^{2}-\gamma ^{2}}{%
(2n+\alpha +\gamma +2)(\alpha +\gamma +2n)}\right) J_{n}^{\left( \alpha
,\gamma \right) }\left( x\right)   \label{Jacobi1} \\
-\frac{4n\left( \alpha +\gamma +n\right) (n+\alpha )(n+\gamma )}{\left(
\alpha +\gamma +2n-1\right) (2n+\alpha +\gamma )^{2}(\alpha +\gamma +2n+1)}%
J_{n-1}^{(\alpha ,\gamma )}(z).  \label{Jacobi2}
\end{gather}%
Besides one knows also that the normalized measure that makes these
polynomials orthogonal is the following: 
\end{subequations}
\begin{equation*}
f\left( x;\alpha ,\gamma \right) =\frac{\Gamma \left( \alpha +\gamma
+2\right) }{2^{\alpha +\gamma +1}\Gamma \left( \gamma +1\right) \Gamma
\left( \alpha +1\right) }\left( 1-x\right) ^{\alpha }(1+x)^{\gamma },
\end{equation*}%
where $\Gamma \left( \eta \right) $ denotes value of the Gamma function at $%
\eta ,$ for $\left\vert x\right\vert <1$ and $\alpha ,\gamma >-1.$

Now let us take $\alpha >0,\gamma >-1,$ $dB\left( x\right) \allowbreak
=\allowbreak f\left( x;\alpha ,\gamma \right) dx$ and $dA\left( x\right)
\allowbreak =\allowbreak f\left( \alpha -1,\gamma \right) dx,$ $b_{n}\left(
x\right) \allowbreak =\allowbreak J_{n}^{\left( \alpha ,\gamma \right)
}\left( x\right) $ and $a_{n}\left( x\right) \allowbreak =\allowbreak
J_{n}^{\left( \alpha -1,\gamma \right) }\left( x\right) .$ One can easily
notice that $dA\left( x\right) \allowbreak =\allowbreak \frac{C}{(-1+x)}%
dB\left( x\right) ,$ where $C=\allowbreak -\frac{2\alpha }{\alpha +\gamma +1}%
,$ hence $D\allowbreak =\allowbreak -1.$ From (\ref{Jacobi}) it follows also
that 
\begin{eqnarray}
\beta _{n}\allowbreak  &=&\allowbreak -\frac{\alpha ^{2}-\gamma ^{2}}{%
(2n+\alpha +\gamma +2)(\alpha +\gamma +2n)},  \label{_bJ} \\
\hat{\beta}_{n-1} &=&\frac{4n\left( \alpha +\gamma +n\right) (n+\alpha
)(n+\gamma )}{\left( \alpha +\gamma +2n-1\right) (2n+\alpha +\gamma
)^{2}(\alpha +\gamma +2n+1)}.  \label{_bbJ}
\end{eqnarray}%
Thus $\kappa _{1}\allowbreak =\allowbreak \beta _{0}+D-C\allowbreak =-\frac{%
\alpha ^{2}-\gamma ^{2}}{(\alpha +\gamma +2)(\alpha +\gamma )}\allowbreak +%
\frac{2\alpha }{\alpha +\gamma +1}-1\allowbreak \allowbreak =\allowbreak -2%
\frac{\gamma +1}{\left( \alpha +\gamma +1\right) \left( \alpha +\gamma
+2\right) }$ and consequently coefficients $\kappa _{n}$ satisfy recursive
equation:%
\begin{equation*}
\kappa _{n}=\beta _{n-1}-1-\frac{\hat{\beta}_{n-2}}{\kappa _{n-1}},
\end{equation*}%
for $n\geq 2.$ One can also easily notice that 
\begin{equation}
\kappa _{n}=-\frac{2n(n+\gamma )}{\left( \alpha +\gamma +2n\right) \left(
\alpha +\gamma +2n-1\right) }  \label{Jkappa}
\end{equation}%
satisfies above mentioned recursive equation. Hence 
\begin{equation}
J_{n}^{\left( \alpha -1,\gamma \right) }\left( x\allowbreak \right)
=J_{n}^{\left( \alpha ,\gamma \right) }\left( x\right) +\kappa
_{n}J_{n-1}^{\left( \alpha ,\gamma \right) }(x).  \label{Jexp}
\end{equation}

As far as application of Corollary \ref{rozw} is concerned we have the
following identity true for $\alpha >1,$ $\gamma >-1,$ and almost all $%
\left\vert x\right\vert <1:$ 
\begin{equation}
1=\frac{\alpha +\gamma +1}{2\alpha }(1-x)(1+\sum_{n\geq 1}^{\infty }\frac{%
(\alpha +\gamma +1)_{2n}}{2^{n}(\alpha +\gamma +1)\left( \alpha +1\right)
_{n}\left( \alpha +\gamma +1\right) _{n}}J_{n}^{\left( \alpha ,\gamma
\right) }\left( x\right) ),  \label{expJ}
\end{equation}
where we use the so called Pochhammer symbol $\left( a\right)
_{n}\allowbreak =\allowbreak a\left( a+1\right) \ldots (a+n-1).$ This is so
since $\frac{\kappa _{n}}{\gamma _{n-1}}\allowbreak =\allowbreak -\frac{%
\left( \alpha +\gamma +2n+1\right) (\alpha +\gamma +2n)}{2\left( \alpha
+n\right) \left( \alpha +\gamma +n\right) },$ by (\ref{_bbJ}) and (\ref%
{Jkappa}) and because $\int_{-1}^{1}\frac{1}{(1-x)^{2}}dB\left( x\right)
<\infty $ for $\gamma >-1,$ $\alpha -2>-1.$
\end{example}

\begin{example}[Charlier polynomials]
Basing on \cite{Koek} let us recall that monic Charlier polynomials $\left\{
c_{n}\left( x;\lambda \right) \right\} _{n\geq -1}$ are polynomials given by
the following 3-term recurrence%
\begin{equation*}
c_{n+1}\left( x;\lambda \right) =(x-n-\lambda )c_{n}\left( x;\lambda \right)
-n\lambda c_{n-1}\left( x;\lambda \right) ,
\end{equation*}%
with $c_{-1}\left( x;\lambda \right) \allowbreak =\allowbreak 0,$ $%
c_{0}\left( x;\lambda \right) \allowbreak =\allowbreak 1.$ For $\lambda >0$
they are orthogonal with respect to discrete measure concentrated at
nonnegative integers with mass at $n$ equal to $\exp \left( -\lambda \right) 
\frac{\lambda ^{n}}{n!},$ $n\geq 0.$ Another words this measure is the
Poisson normalized measure.

In order not to complicate too much let us take $dB\left( n\right)
\allowbreak =\allowbreak $ $\exp \left( -\lambda \right) \frac{\lambda ^{n}}{%
n!}$ and $dA\left( n\right) \allowbreak =\allowbreak \frac{C}{n+1}dB\left(
n\right) $ for $n\allowbreak =\allowbreak 0,1,\ldots $ . Since $\sum_{n\geq
0}^{\infty }\frac{\lambda ^{n}}{\left( n+1\right) !}\allowbreak =\allowbreak 
\frac{\left( \exp \left( \lambda \right) -1\right) }{\lambda }$ we see that $%
C\allowbreak =\allowbreak \frac{\lambda \exp \left( \lambda \right) }{\exp
\left( \lambda \right) -1}.$ Naturally we have also $D=1$ and $\beta
_{n}\allowbreak =\allowbreak n+\lambda $ and $\hat{\beta}_{n-1}\allowbreak
=\allowbreak n\lambda ,$ hence $\kappa _{1}\allowbreak =\allowbreak \lambda
+1-\frac{\lambda \exp \left( \lambda \right) }{-1+\exp \left( \lambda
\right) }\allowbreak =\allowbreak $ $\frac{\exp \left( \lambda \right)
-1-\lambda }{\exp (\lambda )-1}.$ Thus recursive equation satisfied by
coefficients $\kappa _{n}$ is the following:%
\begin{equation*}
\kappa _{n}=n+\lambda -\frac{\left( n-1\right) \lambda }{\kappa _{n-1}},
\end{equation*}%
$n\geq 2.$ In particular we have $\kappa _{2}\allowbreak =\allowbreak 2\frac{%
\exp \left( \lambda \right) -1-\lambda -\lambda ^{2}/2}{\exp \left( \lambda
\right) -1-\lambda },$ $\kappa _{3}\allowbreak =\allowbreak 3\frac{\exp
\left( \lambda \right) -1-\lambda -\lambda ^{2}/2-\lambda ^{3}/3!}{\exp
\left( \lambda \right) -1-\lambda -\lambda ^{2}/2}$ and in general it is
easy to see that 
\begin{equation*}
\kappa _{n}\allowbreak =\allowbreak n\frac{\exp \left( \lambda \right)
-\sum_{j=0}^{n}\frac{\lambda ^{j}}{j!}}{\exp \left( \lambda \right)
-\sum_{j=0}^{n-1}\frac{\lambda ^{j}}{j!}}=n\frac{\sum_{j\geq n+1}\frac{%
\lambda ^{j}}{j!}}{\sum_{j\geq n}\frac{\lambda ^{j}}{j!}}.
\end{equation*}%
Thus we have in particular 
\begin{eqnarray*}
a_{n}\left( x\right) \allowbreak  &=&\allowbreak c_{n}\left( x\right)
+\kappa _{n}c_{n-1}\left( x\right) , \\
\alpha _{n} &=&n+\lambda +\kappa _{n}-\kappa _{n+1}, \\
\hat{\alpha}_{n-1} &=&\frac{\lambda n(\sum_{j\geq n+1}\frac{\lambda ^{j}}{j!}%
)\left( \sum_{j\geq n-1}\frac{\lambda ^{j}}{j!}\right) }{(\sum_{j\geq n}%
\frac{\lambda ^{j}}{j!})^{2}}.
\end{eqnarray*}%
As the application of Corollary \ref{rozw} we have the following identity
true for $\lambda >0$ and $x\allowbreak =\allowbreak 0,1,\ldots $%
\begin{equation}
e^{\lambda }=(1+x)(\frac{e^{\lambda }-1}{\lambda }+\sum_{n\geq 1}\left(
-1\right) ^{n}c_{n}\left( x;\lambda \right) \sum_{k\geq n+1}\frac{\lambda
^{k-n-1}}{k!}).  \label{CA}
\end{equation}%
This is so since $\frac{\kappa _{n}}{\hat{\beta}_{n-1}}=\frac{\exp \left(
\lambda \right) -\sum_{j=0}^{n}\frac{\lambda ^{j}}{j!}}{\lambda (\exp \left(
\lambda \right) -\sum_{j=0}^{n-1}\frac{\lambda ^{j}}{j!})}$ and consequently 
$\prod_{k=1}^{n}\frac{\kappa _{k}}{\hat{\beta}_{k-1}}\allowbreak
=\allowbreak \frac{\exp \left( \lambda \right) -\sum_{k=0}^{n}\frac{\lambda
^{k}}{k!}}{\lambda ^{n}}\allowbreak =\allowbreak \sum_{k\geq n+1}\frac{%
\lambda ^{k-n}}{k!}.$ Let us observe that (\ref{CA}) is not satisfied for
non-positive integer $x.$
\end{example}

\begin{example}[Legendre polynomials]
As it is known Legendre polynomials are the Jacobi polynomials with $\alpha
,\gamma \allowbreak =\allowbreak 0$. As $dB\left( x\right) $ let us consider
measure with the density $f(x;0,0)\allowbreak =\allowbreak 1/2$ on $[-1,1].$
As $dA\left( x\right) $ let us consider measure with the density $\frac{C}{%
2(3-x)}$ on $[-1,1].$ Parameter $C$ we get by direct integration, namely $%
C\allowbreak =\allowbreak \frac{-2}{\ln 2}$ while $D\allowbreak =\allowbreak
\allowbreak -3.$ Further using (\ref{_bJ}) and (\ref{_bbJ}) we get: 
\begin{equation*}
\beta _{n}\allowbreak =\allowbreak 0,\hat{\beta}_{n-1}\allowbreak =\frac{%
n^{2}}{(2n-1)(2n+1)}.
\end{equation*}%
Hence $\kappa _{1}\allowbreak =\allowbreak 0-3+\frac{2}{\ln 2}$ and
consequently coefficients $\kappa _{n}$ are given by the following recursive
equation:%
\begin{equation*}
\kappa _{n+1}\allowbreak =\allowbreak -3-\frac{n^{2}}{(2n-1)(2n+1)\kappa _{n}%
},
\end{equation*}%
for $n\geq 1.$ In particular we get $\kappa _{2}\allowbreak =\allowbreak -3-%
\frac{1}{3(-3+2/\ln 2)}\allowbreak =\allowbreak $ $-\frac{\left( 26\ln
2-18\right) }{9\ln 2-6}.$ Finally we deduce that polynomials defined by 
\begin{equation*}
a_{n}\left( x\right) =J_{n}^{\left( 0,0\right) }+\kappa _{n}J_{n-1}^{\left(
0,0\right) }(x),
\end{equation*}%
are orthogonal with respect to the measure with the density: $\frac{2}{%
(3-x)\ln 2}$ on $[-1,1].$
\end{example}

\section{Extensions and open problems\label{ext}}

In this section we are going to present some generalizations of the results
of Section \ref{gl}. The results are not as nice and compact as the ones
presented above that is why we present them here. We will also pose some
open problems that appeared immediately when writing the article.

Let us return to the setting that was presented in the Introduction and
consider the case $r\allowbreak =\allowbreak 2.$ Let us assume that measures 
$dA$ and $dB$ are related to one another by the relationship 
\begin{equation}
dA\left( x\right) =\frac{C}{x^{2}+Dx+E}dB\left( x\right)  \label{dwumian}
\end{equation}%
and that constants $C,D,E$ are such that $\frac{C}{x^{2}+Cx+E}\geq 0$ on $%
\limfunc{supp}B$ and that measure $dA$ is normalized. Following cited
already \cite{Szab2010}, Proposition 1 we deduce that then polynomials $%
\left\{ a_{n}\right\} $ and $\left\{ b_{n}\right\} $ orthogonal with respect
to these measures are related by the relationship 
\begin{equation}
a_{n}\left( x\right) \allowbreak =\allowbreak b_{n}\left( x\right) +\kappa
_{n}b_{n-1}\left( x\right) +\lambda _{n}b_{n-2}\left( x\right) ,  \label{zal}
\end{equation}%
for some number sequences $\left\{ \kappa _{n}\right\} $ and $\left\{
\lambda _{n}\right\} .$ given in the Proposition below:

\begin{proposition}
\label{uog}Suppose normalized, positive measures $dA$ and $dB$ are related
to one another by (\ref{dwumian}). Let further polynomial sequences $\left\{
a_{n}\right\} $ and $\left\{ b_{n}\right\} $ orthogonal with respect to
these measures satisfy respectively 3-term recurrence (\ref{aa}) and (\ref%
{bb}). Then there exist two number sequences $\left\{ \kappa _{n}\right\} $
and $\left\{ \lambda _{n}\right\} $ such that (\ref{zal}) is satisfied.
Moreover number sequences $\left\{ \kappa _{n}\right\} ,$ $\left\{ \lambda
_{n}\right\} $, $\left\{ \alpha _{n}\right\} $ $\left\{ \hat{\alpha}%
_{n}\right\} ,$ $\left\{ \beta _{n}\right\} ,$ $\left\{ \hat{\beta}%
_{n}\right\} $ are related to one another by the system of equations: 
\begin{eqnarray}
\kappa _{n+1}+\alpha _{n} &=&\beta _{n}+\kappa _{n},  \label{s1} \\
\lambda _{n+1}+\alpha _{n}\kappa _{n}+\hat{\alpha}_{n-1} &=&\hat{\beta}%
_{n-1}+\kappa _{n}\beta _{n-1}+\lambda _{n},  \label{s2} \\
\alpha _{n}\lambda _{n}+\hat{\alpha}_{n-1}\kappa _{n-1} &=&\kappa _{n}\hat{%
\beta}_{n-2}+\lambda _{n}\beta _{n-2},  \label{s3} \\
\hat{\alpha}_{n-1}\lambda _{n-1} &=&\lambda _{n}\hat{\beta}_{n-3}.
\label{s4}
\end{eqnarray}%
with $\lambda _{1}\allowbreak =\allowbreak 0$ and $\kappa _{1},$ $\kappa
_{2} $ and $\lambda _{2}$ defined as solutions of the system of $7$
equations $0\allowbreak =\allowbreak \int_{\limfunc{supp}B}\left(
b_{1}\left( x\right) +\kappa _{1}\right) dA\left( x\right) \allowbreak
=\allowbreak \int_{\limfunc{supp}B}\left( b_{2}\left( x\right) +\kappa
_{2}b_{1}\left( x\right) +\lambda _{2}\right) dA\left( x\right) \allowbreak
=\allowbreak \int_{\limfunc{supp}B}\left( b_{2}\left( x\right) +\kappa
_{2}b_{1}\left( x\right) +\lambda _{2}\right) (b_{1}\left( x\right) +\kappa
_{1})dA\left( x\right) \allowbreak =\allowbreak \int_{\limfunc{supp}%
B}b_{1}\left( x\right) \left( x^{2}+Dx+E\right) dA\left( x\right)
\allowbreak =\allowbreak \int_{\limfunc{supp}B}b_{2}\left( x\right) \left(
x^{2}+Dx+E\right) dA\left( x\right) ,$ $\int_{\limfunc{supp}B}\left(
x^{2}+Dx+E\right) dA\left( x\right) \allowbreak =\allowbreak C,$ \newline
$\int_{\limfunc{supp}B}b_{1}^{2}\left( x\right) \left( x^{2}+Dx+E\right)
dA\left( x\right) \allowbreak =\allowbreak \hat{\beta}_{0}$ with $4$
additional unknowns \newline
$\int_{\limfunc{supp}B}b_{1}\left( x\right) dA\left( x\right) ,$ $\int_{%
\limfunc{supp}B}b_{2}\left( x\right) dA\left( x\right) ,$ $\int_{\limfunc{%
supp}B}b_{1}\left( x\right) b_{2}\left( x\right) dA\left( x\right) $, $\int_{%
\limfunc{supp}B}b_{2}^{2}\left( x\right) dA\left( x\right) .$
\end{proposition}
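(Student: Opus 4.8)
The existence half is already in hand: since (\ref{dwumian}) means $dA\ll dB$ with Radon--Nikodym derivative $C/(x^2+Dx+E)$ and the two measures share their support, Proposition~1 of \cite{Szab2010} yields the expansion (\ref{zal}) for some sequences $\{\kappa_n\}$ and $\{\lambda_n\}$, and we put $\lambda_1=0$, which is harmless since $b_{-1}\equiv 0$. Granting this, the relations (\ref{s1})--(\ref{s4}) are forced algebraic identities, and the real work splits into extracting them and then pinning down the low-index data $\kappa_1,\kappa_2,\lambda_2$.

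To obtain (\ref{s1})--(\ref{s4}) the plan is to substitute (\ref{zal}) into the recurrence (\ref{aa}). Its left side becomes $a_{n+1}=b_{n+1}+\kappa_{n+1}b_n+\lambda_{n+1}b_{n-1}$, and its right side $(x-\alpha_n)(b_n+\kappa_n b_{n-1}+\lambda_n b_{n-2})-\hat\alpha_{n-1}(b_{n-1}+\kappa_{n-1}b_{n-2}+\lambda_{n-1}b_{n-3})$. I would then eliminate every product $xb_k$ by means of (\ref{bb}) written as $xb_k=b_{k+1}+\beta_k b_k+\hat\beta_{k-1}b_{k-1}$, and, using that $\{b_n\}$ is a basis, equate the coefficients of $b_{n+1},b_n,b_{n-1},b_{n-2},b_{n-3}$ on the two sides. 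The coefficient of $b_{n+1}$ matches identically, while those of $b_n,b_{n-1},b_{n-2},b_{n-3}$ give precisely (\ref{s1}), (\ref{s2}), (\ref{s3}), (\ref{s4}). This is routine bookkeeping, valid for all $n$ with nonnegative indices; the conventions $b_{-1}\equiv 0$ and $\lambda_1=0$ absorb the small-index cases.

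It remains to fix $\kappa_1,\kappa_2,\lambda_2$, which are invisible to the recurrences. Writing $a_1=b_1+\kappa_1$ and $a_2=b_2+\kappa_2 b_1+\lambda_2$, orthogonality with respect to $dA$ gives $\int a_1\,dA=\int a_2\,dA=\int a_2a_1\,dA=0$, the first three equations of the listed system. These involve the unknown low-order $dA$-moments, so I would introduce the four auxiliary unknowns $\int b_1\,dA$, $\int b_2\,dA$, $\int b_1 b_2\,dA$, $\int b_2^2\,dA$ (the quantity $\int b_1^2\,dA$ appearing in the third relation is a known affine combination of $\int b_1\,dA$ and $\int b_2\,dA$, so it adds nothing). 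To supply the missing moment information I would use (\ref{dwumian}) in the exact polynomial form $\int p(x)\bigl(x^2+Dx+E\bigr)\,dA(x)=C\int p(x)\,dB(x)$, valid for every polynomial $p$. Taking $p=1,b_1,b_2,b_1^2$ and using $\int dB=1$, $\int b_1\,dB=\int b_2\,dB=0$ and $\int b_1^2\,dB=\hat\beta_0$ gives the four remaining equations, with right-hand sides $C,0,0,C\hat\beta_0$; together this is a system of seven equations in the seven unknowns $\kappa_1,\kappa_2,\lambda_2$ and the four auxiliary moments.

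I expect this initial block to be the main obstacle, and it is what makes the case $r=2$ less clean than Theorem~\ref{glowny}: there $\kappa_1=\beta_0+D-C$ was explicit, whereas here the expansion (\ref{zal}) alone does not determine $\kappa_1,\kappa_2,\lambda_2$, and the low-order behaviour of $dA$ that is needed is not handed over directly by the relation (\ref{dwumian}). All seven equations do hold for the true data---the orthogonality ones because $a_1,a_2$ are the monic $dA$-orthogonal polynomials, the four moment ones as instances of the exact identity above---and consistency is automatic because the given positive measure $dA$ has finite moments; when the linear part is nondegenerate it determines the unknowns outright. A final check concerns the boundary, where (\ref{s1})--(\ref{s4}) degenerate through $\lambda_1=0$ and $b_{-1}\equiv 0$: one verifies directly that the values furnished by the initial system agree with (\ref{s1})--(\ref{s4}) at the smallest admissible index.
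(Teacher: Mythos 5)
Your argument is essentially the paper's own proof: the paper also derives (\ref{s1})--(\ref{s4}) by writing $xa_n$ (equivalently, $a_{n+1}$) in two ways --- once through the recurrence (\ref{aa}) with each $a_k$ replaced by $b_k+\kappa_k b_{k-1}+\lambda_k b_{k-2}$, once by applying (\ref{bb}) to $x(b_n+\kappa_n b_{n-1}+\lambda_n b_{n-2})$ --- and comparing coefficients of $b_n,\dots,b_{n-3}$. Your discussion of the initial block $\kappa_1,\kappa_2,\lambda_2$ is consistent with (indeed slightly more detailed than) the paper, whose proof stops at the coefficient comparison; note only that your right-hand side $C\hat{\beta}_0$ for $\int b_1^2\left(x^2+Dx+E\right)dA$ is the correct one, the statement's $\hat{\beta}_0$ being an apparent typo.
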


\begin{proof}
Uninteresting proof is shifted to Section \ref{dowody}.
\end{proof}

Visibly it is hard to solve system of equations (\ref{s1})-(\ref{s4}) for $%
\{\kappa _{n},\lambda _{n},\alpha _{n},\hat{\alpha}_{n}\}$ in general. Below
we will present one example where it is simple.

\begin{example}[Kesten--McKay distribution]
This example concerns measure that is called Kesten--McKay. It appeared in
probability in the context of random matrices. One of the particular
examples of its density is the density of the form:%
\begin{equation*}
f\left( x;y,\rho \right) =\frac{(1-\rho ^{2})\sqrt{4-x^{2}}}{2\pi ((1-\rho
^{2})^{2}-\rho (1+\rho ^{2})xy+\rho ^{2}(x^{2}+y^{2}))},
\end{equation*}%
for $\left\vert x\right\vert ,\left\vert y\right\vert \leq 2,$ $\rho ^{2}<1.$
To see that $\int_{-2}^{2}f\left( x;y,\rho \right) dx\allowbreak
=\allowbreak 1$ for all $\left\vert y\right\vert \leq 2$ $and$ $\rho ^{2}<1$
is difficult hence to obtain sequence of polynomials orthogonal with respect
to it by Gram-Schmidt procedure is quite hard. As one can easily see this
density is a particular example of the relationship (\ref{dwumian}) with $%
dB(x)\allowbreak =\allowbreak \frac{1}{2\pi }\sqrt{4-x^{2}}dx,$ $%
b_{n}(x)\allowbreak =\allowbreak U_{n}(x/2),$ where $U_{n}(x)$ are the
Chebyshev polynomials of the second kind (for details see e.g. \cite%
{Andrews1999})). One can notice that polynomials $b_{n}$ satisfy the
following 3-term recurrence:%
\begin{equation*}
b_{n+1}(x)=xb_{n}(x)-b_{n-1}(x),
\end{equation*}%
with $b_{-1}(x)\allowbreak =\allowbreak 0,$ $b_{0}(x)\allowbreak
=\allowbreak 1.$ First of all notice that if $\rho \allowbreak =\allowbreak 0
$ then we deal with trivial case. Hence let us assume that $0<\left\vert
\rho \right\vert <1.$ We have $\beta _{n}\allowbreak =\allowbreak 0$ and $%
\hat{\beta}_{n}\allowbreak =\allowbreak 1$ and further $C\allowbreak
=\allowbreak \frac{(1-\rho ^{2})}{\rho ^{2}},$ $D\allowbreak =\allowbreak -%
\frac{(1+\rho ^{2})y}{\rho },$ $E\allowbreak =\allowbreak (\frac{1-\rho ^{2}%
}{\rho })^{2}+y^{2}.$ By direct computation we check that $\kappa
_{1}\allowbreak =\allowbreak \kappa _{2}\allowbreak =\allowbreak -\rho y$
and $\lambda _{2}\allowbreak =\allowbreak \rho ^{2}.$ Now inserting all of
the ingredients to equations (\ref{s1})-(\ref{s4}) we see that $\kappa
_{n}\allowbreak =\allowbreak -\rho y$ for $n\geq 1,$ $\lambda
_{n}\allowbreak =\allowbreak \rho ^{2}$ for all $n\geq 2$ , $\alpha
_{n}\allowbreak =\allowbreak 0,$ and $\hat{\alpha}_{n}\allowbreak
=\allowbreak 1$ for all $n\geq 1.$ Thus $a_{n}(x)\allowbreak =\allowbreak
b_{n}(x)-\rho yb_{n-1}(x)+\rho ^{2}b_{n-2}(x)$ for all $n\geq 2.$
\end{example}

Now let us simplify calculations by assuming that the measure $dB$ and the
polynomial $\left( x^{2}+Dx+E\right) $ are symmetric which implies that
polynomials orthogonal with respect to $dB$ (i.e. $b_{n})$ must contain only
either even or odd powers of $x$. Hence coefficients $\beta _{n}$ are equal
to zero for $n\geq 0$ and also that $D\allowbreak =\allowbreak 0.$
Consequently measure $dA$ must also be symmetric and by similar argument we
deduce that coefficients $\alpha _{n}=0$ for $n\geq 0.$ This results in the
fact that coefficients $\kappa _{n}$ are also zero for all $n\geq 0.$

As a result we have the following Lemma which is in fact a corollary of the
Proposition \ref{uog}.

\begin{lemma}
Suppose normalized, positive measures $dA$ and $dB$ are related to one
another by the relationship: 
\begin{equation*}
dA\left( x\right) \allowbreak =\allowbreak \frac{C}{x^{2}+E}dB\left(
x\right) .
\end{equation*}%
Let further respectively polynomial sequences $\left\{ a_{n}\right\} $ and $%
\left\{ b_{n}\right\} $ orthogonal with respect to these measures satisfy
3-term recurrence (\ref{aa}) and (\ref{bb}). With $\beta _{n}\allowbreak
=\allowbreak 0$ for $n\geq 0.$ Then there exist a number sequences $\left\{
\lambda _{n}\right\} $ such that 
\begin{equation}
a_{n}\left( x\right) \allowbreak =\allowbreak b_{n}\left( x\right) +\lambda
_{n}b_{n-2}\left( x\right) ,  \label{_2}
\end{equation}%
and $\alpha _{n}\allowbreak =\allowbreak 0$ for $n\geq 0$. Moreover number
sequence $\left\{ \lambda _{n}\right\} $, satisfies the following second
order recursive equation for $n\geq 3:$ 
\begin{equation}
\lambda _{n+1}=\lambda _{n}+\hat{\beta}_{n-1}-\frac{\lambda _{n}}{\lambda
_{n-1}}\hat{\beta}_{n-3},  \label{eqn2}
\end{equation}%
with $\lambda _{1}\allowbreak =\allowbreak 0,$ $\lambda _{2}\allowbreak
=\allowbreak \hat{\beta}_{0}+E-C,$ $\lambda _{3}=\hat{\beta}_{1}+E-\frac{E}{%
C-E}\hat{\beta}_{0}.$

Coefficients $\hat{\alpha}_{n}$ are given by relationship:%
\begin{equation*}
\hat{\alpha}_{n}=\frac{\lambda _{n+1}}{\lambda _{n}}\hat{\beta}_{n-2}.
\end{equation*}
\end{lemma}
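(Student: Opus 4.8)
The plan is to read the Lemma off Proposition \ref{uog} by feeding in the symmetry hypotheses, and then to pin down the low-index seed values $\lambda_{1},\lambda_{2},\lambda_{3}$ by direct integration against the measure relation $(x^{2}+E)\,dA=C\,dB$.

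First I would justify the shape (\ref{_2}) together with $\alpha_{n}=0$. Since $dB$ is symmetric and the weight $\tfrac{C}{x^{2}+E}$ is even, $dA$ is symmetric as well, so each $a_{n}$ is an even or odd polynomial according to the parity of $n$; this forces $\alpha_{n}=0$ for all $n\geq 0$. In the general expansion $a_{n}=b_{n}+\kappa_{n}b_{n-1}+\lambda_{n}b_{n-2}$ supplied by Proposition \ref{uog} (relation (\ref{zal})), the term $b_{n-1}$ has parity opposite to that of $a_{n}$ and $b_{n}$, so matching parities forces $\kappa_{n}=0$. What remains is exactly (\ref{_2}), and the existence of $\{\lambda_{n}\}$ is inherited from Proposition \ref{uog}.

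Next I would substitute $\alpha_{n}=\beta_{n}=\kappa_{n}=0$ (and $D=0$) into the system (\ref{s1})--(\ref{s4}). Equations (\ref{s1}) and (\ref{s3}) collapse to $0=0$ and carry no information. Equation (\ref{s4}) becomes $\hat{\alpha}_{n-1}\lambda_{n-1}=\lambda_{n}\hat{\beta}_{n-3}$, which, after shifting the index by one, is precisely the asserted formula $\hat{\alpha}_{n}=\tfrac{\lambda_{n+1}}{\lambda_{n}}\hat{\beta}_{n-2}$. Equation (\ref{s2}) reduces to $\lambda_{n+1}+\hat{\alpha}_{n-1}=\hat{\beta}_{n-1}+\lambda_{n}$; inserting the expression just found for $\hat{\alpha}_{n-1}$ yields the second-order recursion (\ref{eqn2}), valid as soon as $\lambda_{n-1}$ is available, i.e.\ for $n\geq 3$.

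Finally I would establish the three seed values, for which the recursion is unavailable since it would divide by $\lambda_{1}=0$ and invoke the undefined $\hat{\beta}_{-1}$. The value $\lambda_{1}=0$ is immediate because $a_{1}(x)=x=b_{1}(x)$. For $\lambda_{2}$, orthogonality of $a_{2}=b_{2}+\lambda_{2}$ to constants in $L^{2}(dA)$ gives $\lambda_{2}=-\int b_{2}\,dA$; writing $b_{2}=x^{2}-\hat{\beta}_{0}=(x^{2}+E)-(E+\hat{\beta}_{0})$ and integrating against $\tfrac{C}{x^{2}+E}\,dB$, using $\int dB=\int dA=1$, gives $\int b_{2}\,dA=C-E-\hat{\beta}_{0}$, hence $\lambda_{2}=\hat{\beta}_{0}+E-C$. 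For $\lambda_{3}$, parity reduces the orthogonality of $a_{3}=b_{3}+\lambda_{3}b_{1}$ to the single condition against $b_{1}=x$, so $\lambda_{3}=-\int b_{3}b_{1}\,dA\big/\int b_{1}^{2}\,dA$; here $\int b_{1}^{2}\,dA=\int x^{2}\,dA=C-E$, and dividing $b_{3}b_{1}$ by $x^{2}+E$ (using $\int x^{2}\,dB=\hat{\beta}_{0}$) evaluates the numerator and produces $\lambda_{3}=\hat{\beta}_{1}+E-\tfrac{E}{C-E}\hat{\beta}_{0}$. The step I expect to be most delicate is precisely this bookkeeping at the bottom of the recursion: because (\ref{eqn2}) genuinely fails at $n=2$, the values $\lambda_{2}$ and $\lambda_{3}$ must be obtained by the direct integrations above rather than from the recursion, and one must record that the division by $C-E$ is legitimate, which holds because $\int x^{2}\,dA=C-E$ is the second moment of a nondegenerate symmetric probability measure and hence strictly positive. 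Everything else is routine substitution into Proposition \ref{uog}.
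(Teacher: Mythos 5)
Your proposal is correct and follows essentially the same route as the paper: the paper likewise obtains (\ref{_2}) and $\alpha_{n}=\kappa_{n}=0$ from the symmetry discussion preceding the Lemma, reduces the system (\ref{s1})--(\ref{s4}) to the two surviving equations $\lambda_{n+1}+\hat{\alpha}_{n-1}=\lambda_{n}+\hat{\beta}_{n-1}$ and $\hat{\alpha}_{n-1}\lambda_{n-1}=\lambda_{n}\hat{\beta}_{n-3}$ to get (\ref{eqn2}), and computes $\lambda_{2},\lambda_{3}$ by the same direct integrations against $(x^{2}+E)\,dA=C\,dB$ (your $\lambda_{3}$ bookkeeping via division by $x^{2}+E$ is just a cosmetic variant of the paper's manipulation of $\int(x^{2}-\hat{\beta}_{0})(x^{2}+E)\,dA=0$). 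Your explicit remark that $C-E=\int x^{2}\,dA>0$ justifies the division is a small point the paper leaves implicit.
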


\begin{proof}
We apply assumptions to the system of equations (\ref{s1}-\ref{s4}) getting: 
\begin{eqnarray*}
\lambda _{n+1}+\hat{\alpha}_{n-1} &=&\lambda _{n}+\hat{\beta}_{n-1}, \\
\hat{\alpha}_{n-1}\lambda _{n-1} &=&\lambda _{n}\hat{\beta}_{n-3},
\end{eqnarray*}%
from which we get (\ref{eqn2}). To get initial conditions we notice that $%
a_{2}\left( x\right) \allowbreak =\allowbreak x^{2}-\hat{\beta}_{0}+\lambda
_{2}\allowbreak =\allowbreak x^{2}\allowbreak +\allowbreak E+(\lambda
_{2}-\beta _{0}-E),$ so from the relationships $\int_{\limfunc{supp}%
B}a_{2}\left( x\right) dA\left( x\right) \allowbreak =\allowbreak 0$ and $%
\int_{\limfunc{supp}B}(x^{2}+E)dA\left( x\right) \allowbreak \allowbreak
=\allowbreak C$ we get $C\allowbreak +\allowbreak (\lambda _{2}-\beta
_{0}-E)\allowbreak =\allowbreak 0.$ Now to get $\lambda _{3}$ we use
relationship: $\int_{\limfunc{supp}B}a_{1}\left( x\right) a_{3}\left(
x\right) dA\left( x\right) \allowbreak =\allowbreak 0,$ using on the way the
fact that $a_{3}\left( x\right) \allowbreak =\allowbreak b_{3}\left(
x\right) +\lambda _{3}x\allowbreak =\allowbreak x(x^{2}-\hat{\beta}%
_{0})\allowbreak -\allowbreak \hat{\beta}_{1}x\allowbreak +\allowbreak
\lambda _{3}x\allowbreak =\allowbreak x^{3}\allowbreak +\allowbreak
x(\lambda _{3}-\hat{\beta}_{0}-\hat{\beta}_{1})\allowbreak $ and that $\int_{%
\limfunc{supp}B}(x^{2}-\hat{\beta}_{0})(x^{2}+E)dA\left( x\right)
\allowbreak =\allowbreak 0.$ We have: $0\allowbreak =\allowbreak \int_{%
\limfunc{supp}B}((x^{2}-\hat{\beta}_{0})(x^{2}+E)+(\lambda _{3}-\hat{\beta}%
_{1}-E)x^{2}+\hat{\beta}_{0}E)dA\left( x\right) \allowbreak =\allowbreak
\int_{\limfunc{supp}B}((\lambda _{3}-\hat{\beta}_{1}-E)(x^{2}+E)+\hat{\beta}%
_{0}E-E(\lambda _{3}-\hat{\beta}_{1}-E))dA\left( x\right) \allowbreak
=\allowbreak C(\lambda _{3}-\hat{\beta}_{1}-E)-E(\lambda _{3}-\hat{\beta}%
_{1}-E-\hat{\beta}_{0})=0$.
\end{proof}

We will briefly illustrate this Lemma$\allowbreak $ by the following example.

\begin{example}[Jacobi polynomials revisited]
Let us consider the symmetric case i.e. assuming that parameters $\alpha $
and $\gamma $ are equal say to $a$. Then $\beta _{n}\allowbreak =\allowbreak
0$ and $\hat{\beta}_{n-1}\allowbreak =\allowbreak \frac{n(n+2a)}{%
(2a+2n-1)(2a+2n+1)}.$ Parameters $C$ and $E$ are now equal to $-\frac{2a}{%
2a+1}$ and $-1$ respectively. Hence 
\begin{eqnarray*}
\lambda _{2} &=&\hat{\beta}_{0}+E-C=-\frac{2}{(2a+1)(2a+3)} \\
\lambda _{3} &=&\hat{\beta}_{1}+E-\frac{E}{C-E}\hat{\beta}_{0}=-\frac{6}{%
(2a+3)(2a+5)}.
\end{eqnarray*}%
Besides one can easily check that the sequence $\left\{ -\frac{n(n-1)}{%
(2a+2n-1)(2a+2n-3)}\right\} $ satisfies (\ref{eqn2}). So $\lambda
_{n}\allowbreak =\allowbreak -\frac{n(n-1)}{(2a+2n-1)(2a+2n-3)},$ $n\geq 2.$
Hence we have:%
\begin{equation*}
J_{n}^{\left( \alpha -1,\alpha -1\right) }\left( x\right) =J_{n}^{\left(
\alpha ,\alpha \right) }-\frac{n(n-1)}{(2a+2n-1)(2a+2n-3)}J_{n-2}^{\left(
\alpha ,\alpha \right) }.
\end{equation*}%
In particular notice that for $\alpha \allowbreak =\allowbreak 1/2$ we have $%
J_{n}^{(1/2,1/2)}\left( x\right) \allowbreak =\allowbreak U_{n}\left(
x\right) /2^{n}$ where $U_{n}$ are the Chebyshev polynomials of the second
kind, while $J_{n}^{(-1/2,-1/2)}\left( x\right) \allowbreak =\allowbreak
T_{n}\left( x\right) /2^{n-1}$ for $n\geq 2$ and $J_{n}^{(-1/2,-1/2)}\left(
x\right) \allowbreak =\allowbreak T_{n}\left( x\right) $ for $n\allowbreak
=\allowbreak 0,1,$ where $\allowbreak T_{n}\left( x\right) ,$ are the
Chebyshev polynomials of the first kind. Besides $\left[ -\frac{n(n-1)}{%
(2a+2n-1)(2a+2n-3)}\right] _{\alpha =1/2}\allowbreak =\allowbreak -\frac{1}{4%
}$ and we end up with well known relationship between Chebyshev polynomials
of the first and second kind: 
\begin{equation*}
T_{n}\left( x\right) \allowbreak =\allowbreak (U_{n}(x)-U_{n-2}\left(
x\right) )/2.
\end{equation*}
\end{example}

\begin{remark}
Notice that we could have reached the result in the above mentioned example
by applying the procedure described in Section \ref{gl} twice once for
monomial $1-x$ and then for $1+x.$
\end{remark}

\begin{remark}
Notice also that one could invert relationship (\ref{_2}) and find
connection coefficients of polynomials $\left\{ b_{n}\right\} $ expressed in
terms of polynomials $\left\{ a_{n}\right\} .$ Like in the setting of
Corollary \ref{rozw} they would be expressed in terms of products of
coefficients $\left\{ \lambda _{n}\right\} $ (in fact either only with odd
or even numbers) and consequently obtain expansion similar (\ref{szereg})
(in fact involving polynomials $\left\{ b_{n}\right\} $ with even numbers).
\end{remark}

\subsection{Open problems}

\begin{itemize}
\item Is it possible to simplify equation (\ref{eqn2}) and reduce it to the
first order recursive equation?

\item Is it possible to simplify system of equations (\ref{s1}-\ref{s4}) and
reduce it to the problem of solving system of the first order recursive
equations?

\item More generally is it possible to solve general system of equations
presented in Proposition 1 of \cite{Szab2010} or at least deduce more
properties of coefficients $\left\{ c_{n}^{\left( j\right) }\right\} _{n\geq
1,1\leq j\leq r}$ not only that for $j>r$ they are zeros.

\item Is it possible give some properties of connection coefficients between
the two sets of orthogonal polynomials given the fact that orthogonalizing
measures are related by the known relationship $dA\left( x\right)
\allowbreak =\allowbreak F\left( x\right) dB\left( x\right) $ for functions $%
F$ different from the reciprocal of a polynomial. It seems possible to
consider rational functions to start generalization.
\end{itemize}

\section{Proofs\label{dowody}}

\begin{proof}[Proof of Theorem \protect\ref{glowny}]
Noticing that the proof of Proposition 1, iii) does not require the measures 
$dA\left( x\right) $ and $dB\left( x\right) $ to have densities we can apply
its assertion and deduce that if positive, normalized measures are related
by the relationship (\ref{jednomian}) then the polynomial sequences $\left\{
a_{n}\right\} $ and $\left\{ b_{n}\right\} $ orthogonal respectively with
respect to these measures are related by (\ref{zaleznosc}). Hence sequence $%
\left\{ \kappa _{n}\right\} $ exists and consequently we have $a_{n}\left(
x\right) \allowbreak =\allowbreak b_{n}\left( x\right) +\kappa
_{n}b_{n-1}\left( x\right) .$ Remembering that sequences of polynomials $%
\left\{ a_{n}\right\} $ and $\left\{ b_{n}\right\} $ are orthogonal and
satisfy the following 3-term recurrences: 
\begin{eqnarray*}
a_{n+1}\left( x\right) \allowbreak &=&\allowbreak (x-\alpha _{n})a_{n}\left(
x\right) -\hat{\alpha}_{n-1}a_{n-1}\left( x\right) , \\
b_{n+1}\left( x\right) &=&\left( x-\beta _{n}\right) b_{n}\left( x\right) -%
\hat{\beta}_{n-1}b_{n-1}\left( x\right) .
\end{eqnarray*}%
So on one hand we have 
\begin{eqnarray*}
xa_{n}\left( x\right) \allowbreak &=&\allowbreak a_{n+1}\left( x\right)
+\alpha _{n}a_{n}\left( x\right) +\hat{\alpha}_{n-1}a_{n-1}\allowbreak \\
&=&\allowbreak b_{n+1}\left( x\right) \allowbreak +\allowbreak (\kappa
_{n+1}+\alpha _{n}\kappa _{n})b_{n}\allowbreak +\allowbreak (\alpha
_{n}\kappa _{n}+\hat{\alpha}_{n-1})b_{n-1}\left( x\right) \allowbreak
+\allowbreak \hat{\alpha}_{n-1}\kappa _{n-1}b_{n-2}\left( x\right) .
\end{eqnarray*}%
On the other we have:%
\begin{eqnarray*}
xa_{n}\left( x\right) \allowbreak &=&\allowbreak xb_{n}\left( x\right)
+x\kappa _{n}b_{n-1}\left( x\right) \allowbreak \\
&=&\allowbreak b_{n+1}\left( x\right) \allowbreak +\allowbreak \left( \beta
_{n}+\kappa _{n}\right) b_{n}\left( x\right) \allowbreak +\allowbreak (\hat{%
\beta}_{n-1}+\kappa _{n}\beta _{n-1})b_{n-1}\allowbreak +\allowbreak \kappa
_{n}\hat{\beta}_{n-2}b_{n-2}\left( x\right) .
\end{eqnarray*}%
Hence we must have:%
\begin{eqnarray*}
\kappa _{n+1}+\alpha _{n} &=&\beta _{n}+\kappa _{n}, \\
\alpha _{n}\kappa _{n}+\hat{\alpha}_{n-1} &=&\hat{\beta}_{n-1}+\kappa
_{n}\beta _{n-1}, \\
\hat{\alpha}_{n-1}\kappa _{n-1} &=&\kappa _{n}\hat{\beta}_{n-2}
\end{eqnarray*}%
Now let us get $\alpha _{n}$ from the first of the equations 
\begin{equation*}
\alpha _{n}\allowbreak =\allowbreak \beta _{n}+\kappa _{n}-\kappa _{n+1}.
\end{equation*}%
We get further 
\begin{equation*}
\hat{\alpha}_{n-1}\allowbreak =\allowbreak -\beta _{n}\kappa _{n}-\kappa
_{n}^{2}+\kappa _{n}\kappa _{n+1}+\hat{\beta}_{n-1}+\kappa _{n}\beta _{n-1}.
\end{equation*}%
So finally we have:%
\begin{equation*}
\kappa _{n-1}(-\beta _{n}\kappa _{n}-\kappa _{n}^{2}+\kappa _{n}\kappa
_{n+1}+\hat{\beta}_{n-1}+\kappa _{n}\beta _{n-1})\allowbreak =\allowbreak
\kappa _{n}\hat{\beta}_{n-2}
\end{equation*}%
dividing both sides by $\kappa _{n}\kappa _{n-1}$ we get:%
\begin{equation*}
\kappa _{n+1}=\kappa _{n}+\frac{\hat{\beta}_{n-2}}{\kappa _{n-1}}-\frac{\hat{%
\beta}_{n-1}}{\kappa _{n}}+\beta _{n}-\beta _{n-1}
\end{equation*}%
Now notice that we can rearrange terms on both sides of this equation in the
following way:%
\begin{equation*}
\kappa _{n+1}+\frac{\hat{\beta}_{n-1}}{\kappa _{n}}-\beta _{n}=\kappa _{n}+%
\frac{\hat{\beta}_{n-2}}{\kappa _{n-1}}-\beta _{n-1},
\end{equation*}%
proving that quantity $\kappa _{n}+\frac{\hat{\beta}_{n-2}}{\kappa _{n-1}}%
-\beta _{n-1}$ does not depend on $n$ and is equal to $\kappa
_{2}\allowbreak +\allowbreak \frac{\hat{\beta}_{0}}{\kappa _{1}}\allowbreak
-\allowbreak \beta _{1}.$

We can easily find this quantity by finding directly quantities $\kappa _{1}$
and $\kappa _{2}.$

Naturally we have $\kappa _{0}=1$. Remembering that $dB\left( x\right)
=\allowbreak (d+cx)dA\left( x\right) ,$ that 
\begin{equation*}
b_{n+1}\left( x\right) =\left( x-\beta _{n}\right) b_{n}\left( x\right) -%
\hat{\beta}_{n}b_{n-1}\left( x\right)
\end{equation*}%
and since $\int_{\limfunc{supp}A}a_{1}\left( x\right) dA\left( x\right)
\allowbreak =\allowbreak 0$ we must have 
\begin{equation*}
1\allowbreak =\int_{\limfunc{supp}A}dA\left( x\right) \allowbreak
=\allowbreak \int_{\limfunc{supp}A}\frac{C}{\left( D+x\right) }dB\left(
x\right) .
\end{equation*}%
Now since $a_{1}\left( x\right) \allowbreak =\allowbreak b_{1}\left(
x\right) \allowbreak +\allowbreak \kappa _{1}$ we have\ 
\begin{eqnarray*}
0\allowbreak &=&\allowbreak \int_{\limfunc{supp}A}\frac{C\left( b_{1}\left(
x\right) +\kappa _{1}\right) }{\left( D+x\right) }dB\left( x\right)
\allowbreak \\
&=&\allowbreak C+(\kappa _{1}-\beta _{0}-D)\int \frac{C}{x+D}dB\left(
x\right) \allowbreak =\allowbreak C+\kappa _{1}-\beta _{0}-D,
\end{eqnarray*}%
So%
\begin{equation*}
\kappa _{1}\allowbreak =\allowbreak \beta _{0}+D-C.
\end{equation*}%
To find $\kappa _{2}$ we use the fact that $a_{2}\left( x\right) \allowbreak
=\allowbreak b_{2}\left( x\right) +\kappa _{2}b_{1}\left( x\right) .$ Hence
we have: 
\begin{eqnarray*}
0\allowbreak &=&\allowbreak C\int_{\limfunc{supp}A}\frac{\left( b_{2}\left(
x\right) +\kappa _{2}b_{1}\left( x\right) \right) }{\left( D+x\right) }%
dB\left( x\right) \allowbreak \\
&=&\allowbreak \allowbreak C\int_{\limfunc{supp}A}\frac{\left( (-D-\beta
_{1}+\kappa _{2})(b_{1}\left( x\right) +\kappa _{1}-\kappa _{1})-\hat{\beta}%
_{0}\right) }{\left( D+x\right) }dB\left( x\right) \allowbreak \\
&=&\allowbreak C\int_{\limfunc{supp}A}\frac{\left( (D+\beta _{1}-\kappa
_{2})\kappa _{1}-\hat{\beta}_{0}\right) }{\left( D+x\right) }dB\left(
x\right) \allowbreak =\allowbreak \left( (D+\beta _{1}-\kappa _{2})\kappa
_{1}-\hat{\beta}_{0}\right) .
\end{eqnarray*}%
Hence we see that $\kappa _{2}\allowbreak +\allowbreak \frac{\hat{\beta}_{0}%
}{\kappa _{1}}\allowbreak -\allowbreak \beta _{1}\allowbreak =\allowbreak D.$
\end{proof}

\begin{proof}[Proof of Proposition \protect\ref{uog}]
Assuming that both sequences of polynomials i.e. $\left\{ a_{n}\right\} $
and $\left\{ b_{n}\right\} $ are orthogonal we have on one hand: 
\begin{eqnarray*}
xa_{n}\left( x\right) \allowbreak &=&\allowbreak a_{n+1}+\alpha
_{n}a_{n}\left( x\right) +\hat{\alpha}_{n-1}a_{n-1}\left( x\right)
\allowbreak =\allowbreak \\
&=&\allowbreak b_{n+1}\left( x\right) \allowbreak +\allowbreak \left( \kappa
_{n+1}+\alpha _{n}\right) b_{n}\left( x\right) \allowbreak +\allowbreak
\left( \lambda _{n+1}+\alpha _{n}\kappa _{n}+\hat{\alpha}_{n-1}\right)
b_{n-1}\left( x\right) \allowbreak \\
&&+\allowbreak \left( \alpha _{n}\lambda _{n}+\hat{\alpha}_{n-1}\kappa
_{n-1}\right) b_{n-2}\left( x\right) \allowbreak +\allowbreak \hat{\alpha}%
_{n-1}\lambda _{n-1}b_{n-3}\left( x\right) .
\end{eqnarray*}%
and on the other:%
\begin{eqnarray*}
xa_{n}\left( x\right) \allowbreak &=&\allowbreak x\left( b_{n}\left(
x\right) +\kappa _{n}b_{n-1}\left( x\right) +\lambda _{n}b_{n-2}\left(
x\right) \right) \allowbreak \allowbreak \\
&=&\allowbreak b_{n+1}\allowbreak +\allowbreak \left( \beta _{n}+\kappa
_{n}\right) b_{n}\left( x\right) \allowbreak +\allowbreak \left( \hat{\beta}%
_{n-1}+\kappa _{n}\beta _{n-1}+\NEG{\lambda}_{n}\right) b_{n-1}\left(
x\right) \allowbreak +\allowbreak \\
&&\left( \kappa _{n}\hat{\beta}_{n-2}+\lambda _{n}\beta _{n-2}\right)
b_{n-2}\left( x\right) +\allowbreak \lambda _{n}\hat{\beta}%
_{n-3}b_{n-3}\left( x\right) .
\end{eqnarray*}

Comparing expressions by $b_{n},$ $b_{n-1},$ $b_{n-2}$ and $b_{n-3}$ we get
equations (\ref{s1}-\ref{s4}).
\end{proof}

\end{document}